\pgfplotsset{compat=newest}
\numberwithin{equation}{section}
\setlist[description]{leftmargin=4mm}
\setlist[itemize]{leftmargin=5mm}
\setlist[enumerate]{leftmargin=6mm}
\theoremstyle{plain}
\newtheorem{teorema}{Theorem}[section]
\newtheorem{corollario}[teorema]{Corollary}
\newtheorem{lemma}[teorema]{Lemma}
\newtheorem*{teorema*}{Theorem}
\newtheorem*{corollario*}{Corollary}
\newtheorem*{proposizione*}{Proposition}
\newtheorem*{lemma*}{Lemma}
\newtheorem*{esercizio*}{Exercise}
\theoremstyle{definition}
\newtheorem*{definizione*}{Definition}
\newtheorem*{esempio*}{Example}
\newtheorem*{domanda*}{Question}
\theoremstyle{remark}
\newtheorem{osservazione}[teorema]{Remark}
\newtheorem*{osservazione*}{Remark}
\renewenvironment{proof}[1][\proofname]{\par
  \pushQED{\qed}%
  \normalfont \topsep6\p@\@plus6\p@\relax
  \trivlist
  \item[\hskip\labelsep
        \bfseries
    #1\@addpunct{.}]\ignorespaces
}{%
  \popQED\endtrivlist\@endpefalse
}
\title{Stratification of the single blow-up set for Radon measures}
\author[L.~De Masi]{Luigi De Masi}
\address{\textit{L.~De Masi:} Dipartimento di Matematica, Università di Trento, Via Sommarive 14, 38123 Povo, Trento, Italy}
\email{luigi.demasi@unitn.it}
\keywords{Tangent measures; rectifiability; unique blow-up}
\subjclass[2020]{28A75, 28A33, 28C10}
\begin{document}
	\begin{abstract}
		We show that the set of points where the blow-up, in the sense of Preiss, of a signed Radon measure on $\mathbb{R}^n$ is unique and its invariant subspace has dimension $k$ is $k$-rectifiable.
As applications, we obtain simple proofs of a rectifiability criterion for Radon measures and of a theorem, due to Mattila, on measures having unique blow-up almost everywhere.
	\end{abstract}
	\maketitle
%
%
%
%
%
%
%
%
%
\section{Introduction}
The notion of blow-up of a geometric or analytic object, i.e.\ the limit of its enlargements around a point as the zooming factor goes to infinity, is of fundamental importance in many areas of Mathematics. The blow-up of a function or a manifold provides a simplified, but somewhat essential, description of the local behavior of the object ``at infinitesimal scale".
For this and other reasons, the classification of the blow-ups of various mathematical items has become a central theme in geometric and functional analysis.

For a Radon measure $\mu$ in $\mathbb{R}^n$, a natural definition of blow-up at a point $x \in \mathbb{R}^n$ is the one in the sense of Preiss, \cite{preiss87}: the limit in the weak*-topology of Radon measures (in duality with $C_c(\mathbb{R}^n)$) of the rescalings
\begin{equation}\label{eq:rescalings}
c_j (\tau_{x,r_j})_\# \mu,
\end{equation}
for some sequences $c_j>0$, $r_j \to 0$, where $\tau_{x,r}(y)\coloneqq \frac{y-x}{r}$ and ${}_\#$ is the push-forward of Radon measures.

We say that $\mu$ has unique blow-up at $x \in \operatorname{supp} \mu$ if there exists a Radon measure $\mu_x$ with either $|\mu_x|(B_1(0)) = 1$ or $\mu_x = 0$ and such that, for every sequence $r_j \to 0^+$, there are a subsequence, not relabeled, and a constant $c\neq 0$ for which
\begin{equation}\label{eq:def-unique_blowup}
\mu_{x,r_j} \coloneqq
\frac{1}{|\mu|(B_{r_j}(x))} (\tau_{x,r_j})_\# \mu
\overset{\ast}{\rightharpoonup} c \mu_x,
\end{equation}
where $|\mu|$ is the total variation measure of $\mu$ and $\overset{\ast}{\rightharpoonup}$ denotes the weak*-convergence of Radon measures.
We will denote by $\mathcal{S}_\mu$ the set of points where $\mu$ has unique blow-up.

Mattila studied positive measures having a unique blow-up at almost every point: a way to rephrase his main result \cite[Theorem 3.2]{mattila_unique05} is that,
for $\mu$-a.e.\ $x \in \mathcal{S}_\mu$, the blow-up of $\mu$ at $x$ is a multiple of the $k$-dimensional Hausdorff measure $\mathcal{H}^k$ on a $k$-plane.
Although this describes the blow-ups of $\mu$ at $\mu$-a.e.\ point in $\mathcal{S}_\mu$, it can be interesting to study the behavior of $\mu$ in the remaining part of $\mathcal{S}_\mu$.

This problem has already received some attention: in \cite{delnin21} it is proved that, for a given $u \in L^1_{\text{loc}}(\mathbb{R}^n)$, the set $\Sigma_u$ of points $x$ where $\frac{1}{r^n}u \left (\frac{\cdot-x}{r} \right )$ converges, in the strong $L^1(B_1(0))$-topology, to a non-constant function can be covered by a countable union of $(n-1)$-dimensional Lipschitz graphs. 
\begin{itemize}

\item The results in \cite{delnin21} rely on the fact that the rescalings of $u$ converge strongly in $L^1(B_1(0))$, which is more restrictive than the convergence in the weak*-topology of Radon measures. Therefore, for $\mu=u \mathcal{L}^n$ where $\mathcal{L}^n$ is the Lebesgue measure, $\Sigma_u$ is in general strictly contained in the subset of $\mathcal{S}_\mu$ where the unique blow-up of $\mu$ is not a constant function.
Is the latter set rectifiable as well? What can be said if $\mu$ is a general Radon measure?

\item It is easily seen (Lemma \ref{lmm:0-hom}) that the unique blow-up $\mu_x$ of $\mu$ at a point in $x \in \mathcal{S}_\mu$ has an \emph{invariant linear subspace} $D_x$ of $\mathbb{R}^n$, namely $(\tau_{y,1})_\# \mu_x = \mu_x$ for $y \in D_x$. Therefore
\begin{equation}\label{eq:def_s_mu_k}
\mathcal{S}_\mu = \bigcup_{k=1}^n \mathcal{S}_\mu^k,
\qquad\text{where} \qquad
\mathcal{S}_\mu^k \coloneqq \left \{x \in \mathcal{S}_\mu \colon \dim D_x=k \right \}.
\end{equation}
Do these $\mathcal{S}_\mu^k$ enjoy finer rectifiability properties?
\end{itemize}

The main result of this paper provides an answer to these questions.

\begin{teorema}\label{thm:main}
Let $\mu$ be a signed Radon measure on $\mathbb{R}^n$ and $k \in \{0,1,\dots,n\}$. Then $\mathcal{S}_\mu^k$ is contained in a countable union of $k$-dimensional Lipschitz graphs. Moreover, the approximate tangent plane to $\mathcal{S}_\mu^k$ at $x$ coincides with $D_x$ for $\mathcal{H}^{k}$-a.e.\ $x \in \mathcal{S}_\mu^k$.
\end{teorema}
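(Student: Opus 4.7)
My plan is to establish a \emph{cone condition} for $\mathcal{S}_\mu^k$ at each of its points with respect to $D_x$, then to stratify by a countable parametrization to produce Lipschitz graphs, and finally to identify the approximate tangent plane. The cone condition asserts that, for every $x \in \mathcal{S}_\mu^k$,
\[
\lim_{r \to 0^+} \sup \left\{ \frac{\operatorname{dist}(y - x, D_x)}{|y - x|} \,:\, y \in \mathcal{S}_\mu^k \cap B_r(x),\; y \neq x \right\} = 0.
\]
Proving this is the main obstacle of the whole argument.

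I would establish the cone condition by contradiction. Assume there exist $\delta > 0$ and $y_j \to x$ in $\mathcal{S}_\mu^k$ with $r_j := |y_j - x| \to 0$ and $v_j := (y_j - x)/r_j$ satisfying $\operatorname{dist}(v_j, D_x) \geq \delta$; pass to a subsequence with $v_j \to v \notin D_x$. Unique blow-up at $x$ yields, up to a further subsequence, $\mu_{x, r_j} \overset{\ast}{\rightharpoonup} c \mu_x$ with $c \neq 0$, and by Lemma \ref{lmm:0-hom} the measure $\mu_x$ is $D_x$-invariant. A direct push-forward computation gives
\[
\mu_{y_j, r_j} = \alpha_j \, (T_{-v_j})_\# \mu_{x, r_j}, \qquad \alpha_j := \frac{|\mu|(B_{r_j}(x))}{|\mu|(B_{r_j}(y_j))},
\]
where $T_a(w) := w + a$. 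The coefficients $\alpha_j$ stay in a compact subset of $(0, \infty)$, because $y_j \in \operatorname{supp}\mu$ forces the denominator to be positive while the numerator-to-denominator ratio is controlled by testing $\mu_{x, r_j} \to c\mu_x$ on a small ball around $v$. Hence, along a further subsequence, $\mu_{y_j, r_j} \overset{\ast}{\rightharpoonup} \alpha c \, (T_{-v})_\# \mu_x$ for some $\alpha > 0$. Combining this with the unique blow-up at $y_j$ and an iterated blow-up at $y_j$ at a scale $s_j \ll r_j$, a diagonal argument shows that $\mu_{y_j}$ equals, up to affine transformations, a tangent measure of $\mu_x$ at the point $-v$. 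The self-similarity of the unique tangent $\mu_x$ (standard Preiss-style arguments on the support of tangent measures) then forces this tangent at $-v$ to carry an extra translation invariance by $v$, contradicting $v \notin D_x$.

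Granting the cone condition, I would pick a countable dense sequence $\{V_m\}$ in $\operatorname{Gr}(k, n)$ and, for $m, p, q \in \mathbb{N}$, set
\[
\mathcal{S}_\mu^k(m, p, q) := \left\{ x \in \mathcal{S}_\mu^k \,:\, \|P_{D_x} - P_{V_m}\| \leq \tfrac{1}{p},\; \operatorname{dist}(y - x, V_m) \leq \tfrac{2}{p}|y - x| \ \forall y \in \mathcal{S}_\mu^k \cap B_{1/q}(x) \right\},
\]
with $P_W$ the orthogonal projection onto the subspace $W$. By the cone condition, $\mathcal{S}_\mu^k = \bigcup_{m, p, q} \mathcal{S}_\mu^k(m, p, q)$. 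On each stratum a uniform cone condition over $V_m$ holds on balls of radius $\leq 1/(2q)$; a standard projection argument then shows that the orthogonal projection onto $V_m$ is locally bi-Lipschitz on pieces of the stratum, so each piece is covered by a $k$-dimensional Lipschitz graph over $V_m$.

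Finally, $\mathcal{H}^k$-a.e.\ $x \in \mathcal{S}_\mu^k$ lies on such a Lipschitz graph and admits an approximate tangent $k$-plane. By the cone condition at $x$, this tangent plane is contained in $D_x$; since both are $k$-dimensional, they coincide.
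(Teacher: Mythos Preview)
Your key claim---the pointwise cone condition
\[
\lim_{r\to 0^+}\sup\Bigl\{\tfrac{\operatorname{dist}(y-x,D_x)}{|y-x|}:y\in \mathcal{S}_\mu^k\cap B_r(x),\,y\neq x\Bigr\}=0
\]
for every $x\in\mathcal{S}_\mu^k$---is false in general, so the strategy cannot work as stated. Take $\mu=\mathcal{H}^1\llcorner L_0+\sum_{j\ge 1}4^{-j}\,\mathcal{H}^1\llcorner L_j$ on $\mathbb{R}^2$, where $L_0$ is the $x$-axis and $L_j=\{x=2^{-j}\}$. One checks that $0\in\mathcal{S}_\mu^1$ with $D_0=L_0$ (the weighted vertical lines contribute a vanishing fraction of the mass as $r\to 0$), while every $y_j\coloneqq(2^{-j},2^{-j})$ lies in $\mathcal{S}_\mu^1$ with $D_{y_j}$ vertical. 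Then $y_j\to 0$ and $\operatorname{dist}(y_j,D_0)/|y_j|=1/\sqrt{2}$ for all $j$, so the asserted limit is not $0$.

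The place where your argument breaks is the ``iterated blow-up / diagonal'' step. Knowing $\mu_{y_j,r_j}\overset{\ast}{\rightharpoonup}\alpha c\,(T_{-v})_\#\mu_x$ says nothing about $\mu_{y_j}$ unless you also know that $\mu_{y_j,r_j}$ is already close to $c_j\mu_{y_j}$; but the scale $r_j=|y_j-x|$ is imposed by the geometry and there is no reason the blow-up convergence at $y_j$ is effective at that scale. In the example above, the blow-up at $y_j$ only ``sees'' the vertical line once $s\ll 2^{-j}=r_j$. A related issue is that passing to the limit in $\mu_{y_j}$ does not preserve ``$\dim D=k$'' without a quantitative separation hypothesis, and your control on $\alpha_j$ from below by testing $\mu_{x,r_j}$ on a ball around $v$ fails because weak$^*$ convergence of $\mu_{x,r_j}$ gives no lower bound on $|\mu_{x,r_j}|$ of that ball.

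The paper avoids all of this by \emph{not} aiming at a cone condition on the whole of $\mathcal{S}_\mu^k$. It stratifies $\mathcal{S}_\mu^k$ into pieces $E_{i,\ell,\rho}$ on which (i) $\alpha(x)$ and $\eta(x)$ are uniformly bounded, (ii) $c\mu_x$ is uniformly separated from $\operatorname{Hom}_{k+1}^i$, and, crucially, (iii) $d(\mu_{x,r},c\mu_x)<\varepsilon$ for \emph{all} $r<\rho$. On each stratum one then proves, for a fixed opening $3\gamma$, that nearby points of the \emph{same stratum} must lie in $C(x,D_x,3\gamma)$; the contradiction goes through Lemma~\ref{lmm:too_close}, which packages the compactness needed to compare the limit of $c_j\mu_{y_j}$ with the translated homogeneous measure $(\tau_{y,1})_\#\mu_x$. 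Rectifiability and the identification of the approximate tangent with $D_x$ then follow stratum by stratum.
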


Theorem \ref{thm:main} generalizes the results of \cite{delnin21}, where the strong $L^1$-convergence of the rescalings can be actually replaced with the (weaker) weak*-convergence of the associated Radon measures. Indeed, for $u \in L^1_{\mathrm{loc}}(\mathbb{R}^n)$ and $\mu=u \mathcal{L}^n$, the set $\Sigma_u$ defined above is included in $\bigcup_{k=0}^{n-1}\mathcal{S}_\mu^k$. Each $\mathcal{S}_\mu^k$ is covered by a countable union of $k$-dimensional (and thus $(n-1)$-dimensional) Lipschitz graphs, by Theorem \ref{thm:main}.

\medskip

The idea of the proof of Theorem \ref{thm:main} is that, for every $x \in \mathcal{S}_\mu^k$, there is a radius $r>0$ for which the points in $\mathcal{S}_\mu^k \cap B_r(x)$ with similar properties are contained in a cone $x+C$ around $x+D_x$. Otherwise, by contradiction, one can find a sequence of such points $y_j \notin x+C$ with $r_j=|x-y_j| \to 0$ and such that the measures $c_j \mu_{y_j}$ and $\mu_{y_j,r_j}$ are as close as we want. Now, on one hand $c_j\mu_{y_j}$ converges to a measure $\nu$ whose invariant subspace is close to $D_x$; on the other hand $\mu_{y_j,r_j}$, being translations of $\mu_{x,r_j}$ with respect to points outside $C$, converge to a translation of $\mu_x$ whose invariant subspace is not contained in $C$, contradicting the closeness of $c_j\mu_{y_j}$ and $\mu_{y_j,r_j}$.

\medskip

A first outcome of Theorem \ref{thm:main} is the proof a rectifiability criterion for signed Radon measures in terms of their blow-ups .

\begin{corollario}\label{cor:rectif_criterion}
Let $\mu$ be a signed Radon measure on $\mathbb{R}^n$ and $k \in \{0,1,\dots,n\}$. Let us assume that, for $|\mu|$-a.e.\ $x \in \mathbb{R}^n$, there exists a Radon measure $\nu_x$ whose invariant linear subspace $V_x$ is $k$-dimensional and with the property that, for every sequence $r_j \to 0^+$ there are a subsequence, not relabeled, and a constant $c\neq 0$ such that
\begin{equation}
\frac{1}{r_j^k}  (\tau_{x,r_j})_\# \mu
\overset{\ast}{\rightharpoonup} c \nu_x.
\end{equation}
Then $\mu$ is $k$-rectifiable: there exists a signed function $\theta \in L^1_{\mathrm{loc}}(\mathcal{S}_\mu^k,\mathcal{H}^k)$ such that \mbox{$\mu= \theta \mathcal{H}^k \llcorner \mathcal{S}_\mu^k$,} where $\mathcal{S}_\mu^k$ is defined in \eqref{eq:def_s_mu_k} and is $k$-rectifiable by Theorem \ref{thm:main}.
\end{corollario}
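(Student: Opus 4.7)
The strategy is to show that $x\in\mathcal{S}_\mu^k$ for $|\mu|$-a.e.\ $x$ and then combine Theorem~\ref{thm:main} with the Radon--Nikodym theorem. First, by the Banach--Steinhaus theorem, the weak$^*$-convergent subsequences provided by the hypothesis have total variation uniformly bounded on each fixed compact set; in particular, for every sequence $r_j\to 0^+$ one extracts a subsequence along which $|\mu|(B_{r_j}(x))/r_j^k$ is bounded. Hence the $k$-dimensional upper density of $|\mu|$ is finite for $|\mu|$-a.e.\ $x$, which yields $|\mu|\ll\mathcal{H}^k$ by standard density arguments.

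The core step is to show that $\mu$ has a unique Preiss blow-up at $|\mu|$-a.e.\ $x$, for which one needs the companion lower bound $\liminf_{r\to 0}|\mu|(B_r(x))/r^k>0$; I would derive this from a dilation homogeneity of $\nu_x$. Applying the hypothesis at $x$ along both $r_j$ and $Rr_j$ for arbitrary $R>0$, and writing $D_s(z):=sz$ for $s>0$, the identity $\tau_{x,Rr_j}=D_{1/R}\circ\tau_{x,r_j}$ gives
\begin{equation*}
\tfrac{1}{(Rr_j)^k}(\tau_{x,Rr_j})_\#\mu \;=\; R^{-k}\,(D_{1/R})_\#\!\left[\tfrac{1}{r_j^k}(\tau_{x,r_j})_\#\mu\right].
\end{equation*}
Passing to a common subsequence, both sides converge to nonzero scalar multiples of the same measure $\nu_x$, so $(D_{1/R})_\#\nu_x=h(R)\,\nu_x$ for some multiplicative and continuous $h\colon(0,\infty)\to\mathbb{R}\setminus\{0\}$; hence $h(R)=R^{\gamma}$ for some $\gamma\in\mathbb{R}$. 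Since $\nu_x\neq 0$, this forces $|\nu_x|(B_1)>0$: otherwise $|\nu_x|(B_R)=h(R)|\nu_x|(B_1)=0$ for every $R>0$. Lower semicontinuity of the total variation under weak$^*$-convergence then gives
\begin{equation*}
\liminf_j \tfrac{|\mu|(B_{r_j}(x))}{r_j^k} \;\geq\; |c\nu_x|(B_1) \;>\; 0.
\end{equation*}

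With $|\mu|(B_r(x))/r^k$ bounded away from both $0$ and $\infty$, the scalar $\lambda_j:=r_j^k/|\mu|(B_{r_j}(x))$ is bounded, and extracting a further subsequence with $\lambda_j\to\alpha\in(0,\infty)$ yields $\mu_{x,r_j}=\lambda_j\cdot r_j^{-k}(\tau_{x,r_j})_\#\mu\to\alpha c\,\nu_x$. Every subsequential Preiss limit is thus a nonzero scalar multiple of the fixed $\nu_x$, so $\mu$ admits a unique blow-up at $x$ with invariant subspace $V_x$, proving $x\in\mathcal{S}_\mu^k$. Theorem~\ref{thm:main} then places $\mathcal{S}_\mu^k$ in a countable union of $k$-dimensional Lipschitz graphs, so $\mathcal{H}^k\llcorner\mathcal{S}_\mu^k$ is $\sigma$-finite; combined with $|\mu|\ll\mathcal{H}^k$ and $|\mu|(\mathbb{R}^n\setminus\mathcal{S}_\mu^k)=0$, the Radon--Nikodym theorem produces $\theta\in L^1_{\mathrm{loc}}(\mathcal{S}_\mu^k,\mathcal{H}^k)$ with $\mu=\theta\,\mathcal{H}^k\llcorner\mathcal{S}_\mu^k$. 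I expect the main obstacle to be the homogeneity step: one must carefully exploit the fact that the hypothesis fixes $\nu_x$ up to a scalar in order to compare the blow-up limits at the two scales $r_j$ and $Rr_j$.
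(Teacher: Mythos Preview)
Your proof is correct and follows essentially the same strategy as the paper's: show that $|\mu|$-a.e.\ point lies in $\mathcal{S}_\mu^k$, invoke Theorem~\ref{thm:main}, and conclude via the upper $k$-density bound and Radon--Nikodym. If anything you are more careful than the paper, which asserts $|\mu|=|\mu|\llcorner\mathcal{S}_\mu^k$ immediately after the upper bound $s(x)<\infty$; your explicit homogeneity argument for $\nu_x$ (paralleling Lemma~\ref{lmm:0-hom}) and the resulting positive lower density are exactly what is needed to pass from the $r^k$-normalization in the hypothesis to the $|\mu|(B_r(x))$-normalization defining $\mathcal{S}_\mu$.
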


The above statement is a generalization of the well-known rectifiability criterion for positive Radon measures, see for instance \cite[Theorem 11.8]{simonlectures83}, where one usually sets $\nu_x=\theta_0 \mathcal{H}^k \llcorner V_x$ for some $k$-dimensional linear subspace $V_x$ and assumes the existence of the limit $r^{-k}(\tau_{x,r_j})_\# \mu \overset{\ast}{\rightharpoonup} \theta_0 \mathcal{H}^k \llcorner V_x$ as $r \to 0^+$.

Our proof is based on the fact that Theorem \ref{thm:main} and the hypotheses of Corollary \ref{cor:rectif_criterion} easily imply that $\mu$ is concentrated on the $k$-rectifiable set $\mathcal{S}_\mu^k$ and that the upper $k$-density of $|\mu|$ is finite almost everywhere, from which it is straighforward to obtain $\mu \ll \mathcal{H}^k \llcorner \mathcal{S}_\mu^k$.
We nevertheless observe that the above criterion can be actually proved using results which are already present in literature: the almost-everywhere characterization of unique blow-ups given by \cite[Theorem 3.2]{mattila_unique05}, the Marstrand-Mattila rectifiability criterion \cite[Theorem 5.1]{delellis_rectifiability} and the almost-everywhere locality of blow-ups given by \cite[Proposition 3.12]{delellis_rectifiability} (which works for signed functions as well).

\medskip

A further consequence of Theorem \ref{thm:main} is the following alternative proof of \cite[Theorem 3.2]{mattila_unique05} for signed Radon measures. It is based on the intuitive idea that, since each $\mathcal{S}_{\mu}^k$ is $k$-rectifiable, it is reasonable to expect that its subset of points where the blow-up of $\mu$ is more diffused than a $k$-dimensional measure is $|\mu|$-negligible. As above, the result can be also obtained by applying the (signed version of) locality of blow-ups given by \cite[Proposition 3.12]{delellis_rectifiability} to  \cite[Theorem 3.2]{mattila_unique05}.

\begin{corollario}\label{cor:almost_everywhere_rect}
Let $\mu$ be a signed Radon measure on $\mathbb{R}^n$ and $k \in \{0,1,\dots,n\}$. Then, for $|\mu|$-a.e.\ $x \in \mathcal{S}_\mu^k$, the unique blow-up of $\mu$ at $x$ is a multiple of $\mathcal{H}^k\llcorner D_x$.
\end{corollario}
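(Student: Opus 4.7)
The plan is to apply Theorem~\ref{thm:main} twice---once to $\mu$ in $\mathbb{R}^n$ and a second time to the transverse profile of $\mu_x$ inside $D_x^\perp$---combined with Preiss's tangents-of-tangents principle, in order to force the transverse part of $\mu_x$ to collapse to a Dirac at the origin. At $|\mu|$-a.e.\ $x \in \mathcal{S}_\mu^k$ the unique blow-up $\mu_x$ is translation invariant along the $k$-plane $D_x$, and applying the Hahn decomposition together with a standard Fubini-type argument (the positive and negative parts of $\mu_x$ are individually translation invariant along $D_x$), I would disintegrate
\[
\mu_x = \mathcal{H}^k \llcorner D_x \otimes \sigma_x,
\]
where $\sigma_x$ is a signed Radon measure on $D_x^\perp \simeq \mathbb{R}^{n-k}$ whose own invariant subspace is trivial; otherwise $D_x$ would not be the full invariant subspace of $\mu_x$. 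The statement reduces to showing $\sigma_x = c_0 \delta_0$ for some $c_0 \in \mathbb{R}$.

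Next, by the signed-measure version of Preiss's tangents-of-tangents theorem, for $|\mu|$-a.e.\ $x \in \mathcal{S}_\mu^k$ and $|\mu_x|$-a.e.\ $y$, every tangent measure of $\mu_x$ at $y$ belongs to $\operatorname{Tan}(\mu,x) = \{\lambda \mu_x : \lambda \in \mathbb{R}\}$, the last equality coming from the uniqueness of the blow-up at $x$. Writing $y = y_1 + y_2$ with $y_1 \in D_x$ and $y_2 \in D_x^\perp$, the product structure of $\mu_x$ shows that any blow-up of $\mu_x$ at $y$ is a non-zero scalar multiple of $\mathcal{H}^k \llcorner D_x \otimes \tilde\sigma_{y_2}$, where $\tilde\sigma_{y_2}$ denotes a blow-up of $\sigma_x$ at $y_2$. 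For this product to be a scalar multiple of $\mu_x$ itself, $\tilde\sigma_{y_2}$ must be proportional to $\sigma_x$, and its invariant subspace inside $D_x^\perp$ must reduce to $\{0\}$ (otherwise the invariant subspace of the corresponding blow-up of $\mu_x$ would strictly contain $D_x$). Hence $|\sigma_x|$-a.e.\ $y_2 \in D_x^\perp$ belongs to $\mathcal{S}_{\sigma_x}^0$.

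A second application of Theorem~\ref{thm:main}, now to the signed Radon measure $\sigma_x$ on $\mathbb{R}^{n-k}$ with $k$ replaced by $0$, yields that $\mathcal{S}_{\sigma_x}^0$ is $0$-rectifiable, hence at most countable; so $\sigma_x = \sum_i c_i \delta_{y_i}$ for some countable family of atoms. A direct computation shows that the blow-up of such a purely atomic measure at any of its own atoms $y_i$ is $\operatorname{sign}(c_i)\,\delta_0$, since the remaining atoms are pushed off to infinity under $\tau_{y_i, r}$ and Radon local finiteness forces the total contribution of the tail to vanish as $r \to 0^+$. For $\operatorname{sign}(c_i)\,\delta_0$ to be a scalar multiple of $\sigma_x$, the support of $\sigma_x$ must collapse to a single point; and since $0 \in \operatorname{supp} \mu_x$ at $|\mu|$-a.e.\ $x \in \mathcal{S}_\mu^k$ (a standard consequence of the blow-up normalization at a point of $\operatorname{supp}|\mu|$), that single atom must sit at the origin, giving $\sigma_x = c_0 \delta_0$ and $\mu_x = c_0\, \mathcal{H}^k \llcorner D_x$. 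The main technical obstacle I anticipate is the careful verification of the signed version of Preiss's tangents-of-tangents principle, together with the standard fact that the origin belongs to $\operatorname{supp} \mu_x$ at $|\mu|$-almost every point of $\mathcal{S}_\mu^k$.
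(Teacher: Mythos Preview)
Your route is genuinely different from the paper's and considerably more elaborate. The paper exploits the Lipschitz-graph covering of Theorem~\ref{thm:main} directly: at $|\mu|$-a.e.\ $x$ in a covering graph $M_i$ one has $|\mu|$-density $1$ for $M_i$, hence $\mu_x=(\mu\llcorner M_i)_x$ and $\operatorname{supp}\mu_x$ lies in a closed cone around a $k$-plane that contains no half-space of dimension $k{+}1$. Lemma~\ref{lmm:0-hom} says each $y\in\operatorname{supp}\mu_x$ forces $\{\lambda y+z:\lambda>0,\ z\in D_x\}\subseteq\operatorname{supp}\mu_x$; the cone constraint then yields $\operatorname{supp}\mu_x\subseteq D_x$, and $D_x$-invariance finishes. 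No disintegration, no tangents-of-tangents, no second application of the main theorem.

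Your strategy is plausible, but several steps need more than you indicate. The signed Preiss tangents-of-tangents principle is not standard and does not reduce to the positive case, since tangents of $|\mu|$ need not be the total variations of tangents of $\mu$. To place $y_2$ in $\mathcal{S}_{\sigma_x}^0$ in the paper's sense you must also know that $(\sigma_x)_{y_2,r_j}$ admits weak*-convergent subsequences for \emph{every} $r_j\to0$, which is a doubling-type bound on $|\sigma_x|$ at $y_2$ that homogeneity guarantees only at the origin. And for signed measures ``$0\in\operatorname{supp}\mu_x$'' is not a formal consequence of the normalization; here it follows from the homogeneity of $|\mu_x|$ in Lemma~\ref{lmm:0-hom}. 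A shortcut worth noting: once $\sigma_x$ is purely atomic, the homogeneity it inherits from $\mu_x$ already rules out any atom off the origin (a non-zero atom would spawn uncountably many along a ray), so the last two steps of your outline can be replaced by this one-line observation.
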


We conclude this introduction with a comment on the definition of unique blow-up at a point.
As already said above, we say that $\mu_x$ is the unique blow-up of $\mu$ at $x \in \operatorname{supp}\mu$ if, for every sequence $r_j \to 0$, \emph{there exists} a subsequence for which \eqref{eq:def-unique_blowup} holds true. This differs from the definition used in \cite{mattila_unique05} for positive Radon measures, where one does not require the convergence of a subsequence for every choice of $\{r_j\}_j$: according to \cite{mattila_unique05}, one says that  $\mu$ has unique blow-upat $x \in \mathbb{R}$ if there exists $\nu$ such that any non-trivial weak*-limit (in case it exists) of $c_j (\tau_{x,r_j})_\# \mu$, where $c_j>0$ and $r_j \to 0^+$, is of the form $c\nu$ for some $c>0$. 

\cite[Lemma 2.5]{mattila_unique05} shows, provided $\mu$ is positive, that this condition actually implies $\mu_{x,r} \overset{\ast}{\rightharpoonup} c\mu_x$ as $r \to 0$ and, subsequently, our definition (they are actually equivalent).
However, this does not hold in the case of signed Radon measures: let us consider two sequences of positive real numbers $a_j, \rho_j$ such that
\begin{equation}
\lim_{j \to +\infty} \frac{1}{a_j} \sum_{i=j+1}^{+\infty} a_i = 0,
\qquad
\lim_{j \to +\infty} \frac{\rho_{j+1}}{\rho_j}=0.
\end{equation}
Then the Radon measure on $\mathbb{R}$ defined as
\begin{equation}
\mu \coloneqq \sum_{j \in \mathbb{N}} a_j\left ( \delta_{2\rho_j + \rho_j^2} - \delta_{2\rho_j - \rho_j^2} \right )
\end{equation}
has the null measure as unique blow-up at $x=0$ in the sense of \cite{mattila_unique05}, but $\mu_{0,\rho_j} =\frac{1}{|\mu|(B_{\rho_j}(0))} (\tau_{0,\rho_j})_\# \mu $ has no converging subsequences because $|\mu_{x,\rho_j}|\big(B_2(0)\big) \to +\infty$ as $j\to +\infty$.

\subsection*{Acknowledgments}
The author would like to thank Andrea Marchese and Gian Paolo Leonardi for inspiring conversations. He is moreover grateful to Paolo Bonicatto for the support, the careful reading of a preliminary version of the manuscript and for highlighting the fact that Corollaries \ref{cor:rectif_criterion} and \ref{cor:almost_everywhere_rect} can be also obtained by applying the locality of blow-ups to the existent literature.

The author has been supported by PRIN project 2022PJ9EFL ``Geometric Measure Theory: Structure of Singular Measures, Regularity Theory and Applications in the Calculus of Variations", CUP: E53D23005860006 and by INdAM - GNAMPA Project, CUP E53C25002010001. Part of the work has been carried out during the thematic program on ``Free boundary problems" at ESI (Wien), which the author wishes to thank.

\section{Notation}\label{sec:notations}

Throughout the paper, $\mu$ denotes a fixed signed Radon measure on $\mathbb{R}^n$.

\begin{longtable}{@{\extracolsep{\fill}}lp{0.79\textwidth}}

\multicolumn{2}{c}{\textbf{General notation}}\\[4pt]
$B_r(x)$           			 			&   Open ball of radius $r$ and center $x$ in $\mathbb{R}^n$;\\
$\omega_k$								&	Lebesgue measure of the unit ball in $\mathbb{R}^k$;\\
$C(z,V,\gamma)$								& 	Open cone with center $z$, axis the $k$-dimensional linear subspace $V$ and opening angle $\gamma \in \left (0,\frac{\pi}{2} \right ]$: $\{y \in \mathbb{R}^n \colon \operatorname{dist}(y,z+V) <  (\sin \gamma)|y-z|\}$;\\
$\tau_{x,r}$							&  Translation with base $x$ and scaling factor $\frac{1}{r}$: $\tau_{x,r}(y)\coloneqq \frac{y-x}{r}$ for every $y \in \mathbb{R}^n$;\\

\multicolumn{2}{c}{\textbf{}}\\[-1pt]
\multicolumn{2}{c}{\textbf{Measures}}\\[4pt]
$\mathcal{L}^n$							&	Lebesgue measure in $\mathbb{R}^n$;\\
$\mathcal{H}^s$							&	$s$-dimensional Hausdorff measure in $\mathbb{R}^n$;\\
$|\mu|$									&	Total variation measure of the signed Radon measure $\mu$;\\
$\#$									&	Push-forward of Radon measures;\\
$\overset{\ast}{\rightharpoonup}$		&	Weak*-convergence of Radon measures, in duality with $C_c(\mathbb{R}^n)$;\\
$d$										&	Distance compatible with the weak*-convergence of Radon measures: \mbox{$\nu_j \overset{\ast}{\rightharpoonup} \nu \Longleftrightarrow d(\nu_j,\nu)\to 0$} and $\sup_{j \in \mathbb{N}} |\nu_j|(C)<+\infty$ for any bounded set $C \subset \mathbb{R}^n$.\\
$\mu_{x,r}$								&	Normalized rescaling of $\mu$ at $x \in \operatorname{supp} \mu$ at scale $r$: $\mu_{x,r} \coloneqq \frac{1}{|\mu|(B_r(x))}(\tau_{x,r})_{\#} \mu$;\\
$\mu_x$									& 	Normalized unique blow-up of $\mu$ at $x \in \mathbb{R}^n$ as in \eqref{eq:def-unique_blowup};\\ 
$D_x$									&	Invariant subspace of $\mu_x$, given by Lemma \ref{lmm:0-hom};\\
$\mathcal{S}_\mu$						&	Set of points in $\mathbb{R}^n$ where the blow-up of $\mu$ is unique;\\
$\mathcal{S}^k_\mu$						&	Set of points in $\mathbb{R}^n$ where the invariant subspace of the unique blow-up of $\mu$ has dimension $k$;\\
$\operatorname{Hom}_k^\beta$							& 	Set of measures $\nu$
such that for some $\alpha\in [0,\beta]$ it holds  $(\tau_{0,\lambda})_\# \nu=\lambda^\alpha \nu$ for all $\lambda>0$, whose invariant subspace has dimension at least $k$ and $|\nu|\big(B_1(0)\big)\leq 1$;
\end{longtable}

\section{Proof of the results}

We start with a simple remark on the definition of unique blow-up at a point: for every point $x \in \mathcal{S}_\mu$ such that $\mu_x \neq 0$, the constant $c$ in \eqref{eq:def-unique_blowup} is bounded and away from $0$.

\begin{lemma}\label{lmm:lower_bound_mass_blow-up}
Let $\mu$ be a signed Radon measure on $\mathbb{R}^n$ and let $\mu_x \neq 0$ be the unique blow-up of $\mu$ at $x \in \mathcal{S}_\mu$. Then there exists $\eta=\eta(x)>0$ such that, for every sequence $\mu_{x,r_j}$ converging to $c \mu_x$ as $r_j \to 0^+$, it holds $\eta \leq |c|\leq 1$.
\end{lemma}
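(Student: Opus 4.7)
The plan is to handle the two bounds separately. The upper one is almost tautological from the normalization together with weak*-lower semicontinuity, while the lower one requires a contradiction argument that exploits crucially the uniqueness hypothesis on the blow-up.

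For $|c|\le 1$: since $\mu_x\neq 0$, by the very definition of unique blow-up one has $|\mu_x|(B_1(0))=1$, and $|\mu_{x,r_j}|(B_1(0))=1$ for every $j$ by construction of the rescalings. Applying to the convergence $\mu_{x,r_j}\overset{\ast}{\rightharpoonup} c\mu_x$ the standard weak*-lower semicontinuity of the total variation on the open ball $B_1(0)$ then yields
\[
|c|=|c\mu_x|(B_1(0))\le \liminf_{j\to+\infty}|\mu_{x,r_j}|(B_1(0))=1.
\]

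For $|c|\ge \eta(x)>0$, I would fix, once and for all, a test function $\varphi\in C_c(\mathbb{R}^n)$ with $\int\varphi\,d\mu_x=1$, which exists because $\mu_x\neq 0$ in duality with $C_c(\mathbb{R}^n)$. For any sequence with $\mu_{x,r_j}\overset{\ast}{\rightharpoonup} c\mu_x$ one has $\int\varphi\,d\mu_{x,r_j}\to c$, so a uniform lower bound on $|c|$ reduces to showing
\[
\eta(x)\coloneqq \liminf_{r\to 0^+}\bigg|\int\varphi\,d\mu_{x,r}\bigg|>0.
\]
I would prove this by contradiction: if $\int\varphi\,d\mu_{x,s_m}\to 0$ along some sequence $s_m\to 0^+$, then the uniqueness-of-blow-up assumption applied to $\{s_m\}_m$ provides a (not relabeled) subsequence and a constant $c'\neq 0$ such that $\mu_{x,s_m}\overset{\ast}{\rightharpoonup} c'\mu_x$; testing against $\varphi$ would then give $c'=\lim_m\int\varphi\,d\mu_{x,s_m}=0$, a contradiction.

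The only mild subtlety is the selection of a single scalar test function $\varphi\in C_c(\mathbb{R}^n)$ with $\mu_x(\varphi)\neq 0$, which is a standard consequence of $\mu_x\neq 0$. Conceptually, the key point is that the very same limit measure $\mu_x$ must appear along every converging subsequence, and it is precisely this rigidity that forces the scaling constants $c$ to be bounded away from $0$ uniformly in the sequence $r_j\to 0^+$; no further obstacle is anticipated.
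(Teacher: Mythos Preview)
Your proof is correct. The upper bound is handled identically to the paper (lower semicontinuity of the total variation on $B_1(0)$ against the normalization; your observation that $|\mu_{x,r_j}|(B_1(0))=1$ rather than merely $\le 1$ is in fact correct since $\tau_{x,r}$ is a homeomorphism).

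For the lower bound, your route is genuinely different from the paper's and somewhat more elementary. The paper argues by contradiction at the level of measures: assuming the infimum of $|c|$ over all admissible sequences is $0$, it builds, via a diagonal argument using the metric $d$ that metrizes weak*-convergence on locally uniformly bounded families, a single sequence $\rho_m\to 0$ with $d(\mu_{x,\rho_m},0)\to 0$, and then invokes the unique blow-up hypothesis on $\{\rho_m\}$ to reach a contradiction with $c\neq 0$. Your argument bypasses the metric $d$ and the diagonal construction entirely by reducing the question to the scalar function $r\mapsto \big|\int\varphi\,d\mu_{x,r}\big|$ for a single fixed $\varphi\in C_c(\mathbb{R}^n)$ with $\mu_x(\varphi)=1$; this yields an explicit formula $\eta(x)=\liminf_{r\to 0^+}\big|\int\varphi\,d\mu_{x,r}\big|$ and the contradiction is obtained directly by testing a single weak* limit against $\varphi$. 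The trade-off: your approach is shorter and avoids any subtlety about $d$, while the paper's argument is phrased in the same metric framework used later in the article (Lemma~\ref{lmm:too_close} and the proof of Theorem~\ref{thm:main}), so it fits more seamlessly with the rest of the exposition.
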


\begin{proof}
The upper bound on $|c|$ trivially follows from the lower semi-continuity of the total variation measure of open sets under weak*-convergence and the fact that, by definition, it holds $|\mu_{x,r}|\big(B_1(0)\big) \leq 1$ for every $r>0$, while $|\mu_x| \big(B_1(0)\big) =1$. It remains to show the lower bound.

By contradiction, let us assume that, for every $m \in \mathbb{N}$, there exists a sequence $r_j(m) \to 0$ such that $\mu_{x,r_j(m)} \overset{\ast}{\rightharpoonup} c_m \mu_x$ with $|c_m| \leq \frac{1}{m}$. Since the distance $d$ metrizes the weak-*convergence of Radon measures, there exists $M_m \in \mathbb{N}$ such that
\begin{equation}
d(\mu_{x,r_j(m)}, c_m \mu_x) < \frac{1}{m}
\qquad
\forall  j \geq M_m.
\end{equation}
We can thus define a sequence $\rho_m  \downarrow 0$ such that $\rho_m \coloneqq r_{j_m}(m)$ for some $j_m \geq M_m$. Since the sequence of measures $c_m \mu_x$ converges strongly to $0$, it holds
\begin{equation}\label{eq:distance_from_0}
\lim_{m \to +\infty} d(\mu_{x,\rho_m}, 0)= 0.
\end{equation}
On the other hand, by definition of unique blow-up at $x$, the sequence $\mu_{x,\rho_m}$ must have a weakly*-converging subsequence, whose limit is $0$ by \eqref{eq:distance_from_0}, hence contradicting the assumption $c \neq 0$ in \eqref{eq:def-unique_blowup}.
\end{proof}

We now show that the unique blow-up of $\mu$ at $x$ is homogeneous and that it has an invariant subspace.

\begin{lemma}\label{lmm:0-hom}
Let $\mu$ be a signed Radon measure on $\mathbb{R}^n$ and let $\mu_x \neq 0$ be the unique blow-up of $\mu$ at $x \in \mathcal{S}_\mu$. Then there exists $\alpha=\alpha(x)\geq 0$ such that $\mu_x$ and $|\mu_x|$ are $(\alpha-n)$-homogeneous,\footnote{According to this definition, the measure $f\mathcal{L}^n$, where $f$ is $p$-homogenous and locally integrable, is $p$-homogeneous.} namely
\begin{equation}\label{eq:hom_blow-up}
(\tau_{0,\lambda})_\# \mu_x=\lambda^\alpha \mu_x,
\quad
(\tau_{0,\lambda})_\# |\mu_x|=\lambda^\alpha |\mu_x|
\qquad
\forall \lambda>0.
\end{equation}
For $\mu_x$, and more generally for every $(\alpha-n)$-homogeneous Radon measure, the set
\begin{equation}
C_x\coloneqq \left \{ y \in \mathbb{R}^n \colon (\tau_{y,\lambda})_\# \mu_x=\lambda^\alpha (\tau_{y,1})_\# \mu_x \,\,\,\,\forall \lambda>0 \right \}
\end{equation}
coincides with the invariant subspace $D_x$ of $\mu_x$, that is
\begin{equation}
D_x \coloneqq \left \{ y \in \mathbb{R}^n \colon (\tau_{y,1})_\# \mu_x= \mu_x \right \}.
\end{equation}
Moreover $D_x$ is a linear subspace of $\mathbb{R}^n$ and, provided $\mu_x \neq 0$, it holds $\dim D_x \leq \alpha$.
\end{lemma}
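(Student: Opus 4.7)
The statement splits into four sub-claims---the homogeneity of $\mu_x$ and $|\mu_x|$, the identity $C_x=D_x$, the linearity of $D_x$, and the bound $\dim D_x\leq\alpha$---which I would address in this order. For the homogeneity, I would start from the algebraic identity
\[
\mu_{x,\lambda r}=\frac{|\mu|(B_r(x))}{|\mu|(B_{\lambda r}(x))}\,(\tau_{0,\lambda})_{\#}\mu_{x,r},\qquad \lambda,r>0.
\]
Fixing $\lambda>0$ and any sequence $r_j\to 0^+$, the unique blow-up hypothesis combined with Lemma \ref{lmm:lower_bound_mass_blow-up} allows to pass to a subsequence along which both $\mu_{x,r_j}\overset{\ast}{\rightharpoonup} c_1\mu_x$ and $\mu_{x,\lambda r_j}\overset{\ast}{\rightharpoonup} c_2\mu_x$ with $c_1,c_2\neq 0$. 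Passing to the weak*-limit in the identity forces the scalar ratio to converge to some $\theta_\lambda>0$ and produces a relation $(\tau_{0,\lambda})_{\#}\mu_x=\psi(\lambda)\mu_x$ with $\psi(\lambda)\in\mathbb{R}$ uniquely determined. Multiplicativity $\psi(\lambda\mu)=\psi(\lambda)\psi(\mu)$ then follows from $\tau_{0,\lambda\mu}=\tau_{0,\lambda}\circ\tau_{0,\mu}$, and continuity comes from choosing $\varphi\in C_c(\mathbb{R}^n)$ with $\int\varphi\,d\mu_x\neq 0$ (possible since $\mu_x\neq 0$) and writing $\psi(\lambda)=\int\varphi(z/\lambda)\,d\mu_x(z)\big/\int\varphi\,d\mu_x$, which is continuous in $\lambda$ by dominated convergence. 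Hence $\psi(\lambda)=\lambda^\alpha$ for some $\alpha\in\mathbb{R}$; passing to total variations gives the analogous homogeneity for $|\mu_x|$, and $|\mu_x|(B_\lambda(0))=\lambda^\alpha$ is finite for every $\lambda>0$ only if $\alpha\geq 0$.

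For the equality $C_x=D_x$ (valid for any $(\alpha-n)$-homogeneous Radon measure), the inclusion $D_x\subseteq C_x$ is immediate from $\tau_{y,\lambda}=\tau_{0,\lambda}\circ\tau_{y,1}$. For the reverse, I would fix $y\in C_x$ and $\varphi\in C_c(\mathbb{R}^n)$: computing $\int\varphi\,d(\tau_{y,\lambda})_{\#}\mu_x$ first through the defining relation of $C_x$, and then by applying the $(\alpha-n)$-homogeneity around $0$ to the shifted test function $u\mapsto\varphi(u-y/\lambda)$, and comparing the two expressions, yields
\[
\int\varphi(z-y)\,d\mu_x(z)=\int\varphi(z-y/\lambda)\,d\mu_x(z)\qquad\forall\,\lambda>0.
\]
Sending $\lambda\to+\infty$ and invoking dominated convergence gives $(\tau_{y,1})_{\#}\mu_x=\mu_x$, i.e.\ $y\in D_x$.

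Finally, $D_x$ is trivially a subgroup of $\mathbb{R}^n$ containing $0$, and closure under positive scaling follows from $\mu_x(\lambda y+\lambda B)=\lambda^\alpha\mu_x(y+B)=\lambda^\alpha\mu_x(B)=\mu_x(\lambda B)$ for $y\in D_x$, $\lambda>0$ and $B$ bounded Borel, so $D_x$ is a linear subspace. For the dimension bound, I would set $k=\dim D_x$ and pack $D_x\cap B_1(0)$ with $N\geq c\,r^{-k}$ disjoint balls $B_r(y_i)$ with centers in $D_x$; translation invariance of $|\mu_x|$ along $D_x$ gives $|\mu_x|(B_r(y_i))=|\mu_x|(B_r(0))=r^\alpha$, and since the disjoint union lies inside $B_{1+r}(0)$ of $|\mu_x|$-mass $(1+r)^\alpha$, one obtains $c\,r^{\alpha-k}\leq(1+r)^\alpha$; letting $r\to 0^+$ forces $\alpha\geq k$. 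I expect the main difficulty to be the correct sign $\psi(\lambda)=+\lambda^\alpha$ in the homogeneity step: for signed $\mu_x$ an a priori alternative $\psi(\lambda)=-\lambda^\alpha$ is only ruled out by the continuity argument via the test-function representation of $\psi$.
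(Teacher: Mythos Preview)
Your proposal is correct and follows the same four-step architecture as the paper; the only noticeable tactical difference is in the inclusion $C_x\subseteq D_x$, where the paper uses a single dilation ($\lambda=2$) together with the identity $y+A=2\bigl(y+\tfrac{-y+A}{2}\bigr)$, while you instead pass to the limit $\lambda\to+\infty$ in the relation $\int\varphi(z-y)\,d\mu_x=\int\varphi(z-y/\lambda)\,d\mu_x$. One small slip to fix: finiteness of $\lambda^\alpha$ for all $\lambda>0$ does not force $\alpha\geq 0$---the correct reason is that $\lambda\mapsto|\mu_x|(B_\lambda(0))=\lambda^\alpha$ must be non-decreasing (equivalently, remain bounded as $\lambda\to 0^+$).
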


\begin{proof}
\begin{description}[font=\normalfont\itshape\space]
\item[$\bullet$ $\mu_x$ and $|\mu_x|$ are $(\alpha-n)$-homogeneous]
We have
\begin{equation}
(\tau_{0,\lambda})_\# \mu_{x,r}
=
\frac{1}{|\mu|\big(B_r(x)\big)} (\tau_{x,\lambda r})_{\#}\mu 
=
\frac{|\mu|\big(B_{\lambda r}(x)\big)}{|\mu|\big(B_{r}(x)\big)}
\mu_{x,\lambda r}
\qquad
\forall r,\lambda>0.
\end{equation}
Let us choose a sequence $r_j \to 0$ such that $\mu_{x,r_j} \overset{\ast}{\rightharpoonup} c\mu_x \neq 0$ and a subsequence such that $\mu_{x,\lambda r_{j_i}}$ converge as well to $c' \mu_x \neq 0$. Then the above equation implies
\begin{equation}\label{eq:limit_ratio_measures_balls}
(\tau_{0,\lambda})_\# c\mu_{x}
=
(c'q) \mu_x,
\qquad
\text{where } \,\,
q \coloneqq \lim_{i \to +\infty} \frac{|\mu|\big(B_{\lambda r_{j_i}}(x)\big)}{|\mu|\big(B_{r_{j_i}}(x)\big)}.
\end{equation}
This in particular proves that the product $c'q$ does not depend on the subsequence $r_{j_i}$ and the ratio $\frac{c'q}{c}$ depends only on $\lambda$; thus we can define the map $\lambda \mapsto \psi(\lambda)= \frac{c'q}{c}$ such that 
\begin{equation}
(\tau_{0,\lambda})_\# \mu_{x}
=
\psi(\lambda) \mu_x
\qquad
\forall \lambda>0.
\end{equation}
The fact that $\psi(1)=1$, $\psi(\lambda) \neq 0$ for every $\lambda>0$ and the continuity of the map $\lambda \mapsto (\tau_{0,\lambda})_\# \mu_{x}$ with respect to the weak*-topology imply $\psi(\lambda)>0$ for every $\lambda>0$.
Since for each open set $U \subseteq \mathbb{R}^n$ it holds
\begin{equation}
\begin{aligned}
(\tau_{0,\lambda})_\#|\mu_{x}|(U)
&=
\sup \left \{ \int f(y) \,\mathrm{d}(\tau_{0,\lambda})_\#\mu_x(y) \colon f \in C_c(U), \|f\|_{C^0} \leq 1 \right \}
\\
&=
\psi(\lambda)\sup \left \{ \int f(y) \,\mathrm{d}\mu_x(y) \colon f \in C_c(U), \|f\|_{C^0} \leq 1 \right \}
\\
&=
\psi(\lambda) |\mu_x|(U),
\end{aligned}
\end{equation}
we infer $(\tau_{0,\lambda})_\#|\mu_{x}|= \psi(\lambda)|\mu_x|$ for all $\lambda>0$ as well. As in \cite[Lemma 2.5 (2)]{mattila_unique05}, it is easily seen that $\psi(\lambda \rho)=\psi(\lambda)\psi(\rho)$, thus there exists $\alpha\geq 0$ such that $\psi(\lambda)=\lambda^\alpha$ for every $\lambda>0$.

\item[$\bullet$ $C_x=D_x$] We consider an $(\alpha-n)$-homogeneous Radon measure $\nu$ which is fixed throughout the rest of the proof and we prove the statements for the sets $C,D$ corresponding to $\nu$.
By definition of push-forward and of $\tau_{y,\lambda}$, it follows
\begin{equation}
\begin{gathered}
C= \{y \in \mathbb{R}^n \colon \nu(y+\lambda A) = \lambda^\alpha \nu(y+A) \,\,\,\,\forall \lambda>0,\,\,\forall A \subset \mathbb{R}^n \text{ bounded and Borel} \},
\\[4pt]
D= \{y \in \mathbb{R}^n \colon \nu(y+ A) = \nu(A) \,\,\,\,\forall A \subset \mathbb{R}^n \text{ bounded and Borel} \},
\end{gathered}
\end{equation}
where $y+\lambda A \coloneq \{y+\lambda z \colon z \in A\}=\tau_{y,\lambda}^{-1}(A)$.
Clearly $0 \in C,D$ and let $A \subset \mathbb{R}^n$ be any bounded Borel set. If $y \in C$, then
\begin{equation}
\nu(y+A)
=
\nu \left (2 \left (y+ \frac{-y+A}{2} \right )\right )
\overset{(0 \in C)}{=} 2^\alpha \nu \left (y+ \frac{-y+A}{2}\right )
\overset{(y \in C)}{=}
\nu (y-y+A)=
\nu(A),
\end{equation}
thus we infer $y \in D$. On the other hand, if $y \in D$, we have
\begin{equation}
\nu(y+\lambda A) \overset{(y \in D)}{=} \nu(\lambda A)
\overset{(0 \in C)}{=} \lambda^\alpha \nu(A)
\overset{(y \in D)}{=} \lambda^\alpha\nu(y+A)
\qquad
\forall \lambda>0,
\end{equation}
concluding $y \in C$.

\item[$\bullet$ $D_x$ is a linear space] For every $y,z \in D$, we have
\begin{equation}
\nu(y-z+ A) \overset{(y\in D)}{=} \nu(-z+A) \overset{(z\in D)}{=} \nu(z-z+A)
=
\nu(A),
\end{equation}
thus $y-z \in D$. For every $y \in D$ and $\lambda>0$, it holds
\begin{equation}
\nu(\lambda y + A) = \nu \left ( \lambda \left (y+ \frac{A}{\lambda} \right ) \right )
\overset{(0 \in C)}{=}
\lambda^\alpha \nu \left ( y + \frac{A}{\lambda}\right )
\overset{(y \in D)}{=}
\lambda^\alpha \nu \left ( \frac{A}{\lambda} \right )
\overset{(0 \in C)}{=}
\nu(A),
\end{equation}
showing $\lambda y \in D$.

\item[$\bullet$ The dimension of $D_x$ is at most $\alpha$]
Let us assume $\nu \neq 0$ and $k \coloneqq \dim D$.
Up to multiplying by a constant, we can assume $|\nu| \big(B_1(0)\big)=1$, so that the homogeneity provides
\begin{equation}\label{eq:hom_meas_balls}
|\nu|\big(B_r(0)\big)= r^\alpha\qquad \forall r>0.
\end{equation}
Let $Q \coloneqq [-1,1]^n$ be the unit cube. By a dyadic decomposition, for every $j \in \mathbb{N}$ the cube $Q$ contains $2^{kj}$ disjoint open balls of radius $2^{-j}$ with centers on $D$. Thus \eqref{eq:hom_meas_balls} and the invariance of $|\nu|$ with respect translations on $D$ yield
\begin{equation}
|\nu|(Q) \geq 2^{j(k-\alpha)} \qquad \forall j \in \mathbb{N},
\end{equation}
which implies $k\leq \alpha$.\qedhere
\end{description}
\end{proof}

\begin{osservazione}\label{rem:upper_bound_limsup_ratio}
In the first step of the above proof, \eqref{eq:limit_ratio_measures_balls}, $|c|,|c'| \in [\eta(x),1]$ and $\psi(\lambda)=\frac{c'q}{c}=\lambda^\alpha$ imply
\begin{equation}\label{eq:estimate_ratio_balls}
\lambda^\alpha \eta(x)
\leq
\liminf_{r \to 0} \frac{|\mu|\big(B_{\lambda r}(x)\big)}{|\mu|\big(B_{r}(x)\big)}
\leq
\limsup_{r \to 0} \frac{|\mu|\big(B_{\lambda r}(x)\big)}{|\mu|\big(B_{r}(x)\big)}
\leq
\frac{\lambda^\alpha}{\eta(x)}
\qquad
\forall \lambda>0.
\end{equation}
\end{osservazione}

\bigskip

We now prove that, if a Radon measure $\sigma$ is homogeneous with respect to a point which is sufficiently distant from the invariant subspace of another homogeneous measure $\nu$, then the two measures cannot be too close. See Section \ref{sec:notations} for the relevant notations.

\begin{lemma}\label{lmm:too_close}
For every $\beta,M\geq 0$ and every $\eta,\gamma,\delta>0$, there exists $\varepsilon>0$ with the following property.
Let $\nu,\sigma$ be signed Radon measures in $\mathbb{R}^n$ and let $V$ be a $k$-dimensional linear subspace such that:
\begin{itemize}
\item $|\nu|\big(B_1(0)\big) \leq 1$ and $|\sigma|\big(B_1(0)\big) \leq M$;
\item  $\nu$ is $(\alpha-n)$-homogeneous for $\alpha\in [0,\beta]$ and $V$ is its invariant subspace given by Lemma \ref{lmm:0-hom};
\item $d(c\nu,\omega)\geq \delta$ for every $|c| \in [\eta,1]$ and every $\omega \in \operatorname{Hom}_{k+1}^\beta$, that is the set of measures $\omega$ which are $(\alpha-n)$-homogeneous for some $\alpha\in [0,\beta]$, whose invariant subspace has dimension at least $k+1$ and such that $|\omega|\big(B_1(0)\big)\leq 1$;
\item  $(\tau_{y,\lambda})_\# \sigma=\lambda^{\alpha'} (\tau_{y,1})_\# \sigma$ for every $\lambda>0$, where $y \in \big( \overline{B_{\frac{1}{2}}(0)} \setminus B_{\frac{1}{4}}(0) \big)\setminus C(0,V,\gamma)$ and $\alpha' \in [0,\beta]$.
\end{itemize}
Then $d(c\nu,\sigma)>  \varepsilon$ for every $c \in [\eta,1]$.
\end{lemma}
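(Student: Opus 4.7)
The natural approach is to argue by contradiction: assuming no such $\varepsilon$ works, I extract sequences $(\nu_j, \sigma_j, V_j, y_j, c_j, \alpha_j, \alpha_j')$ obeying all four hypotheses but with $d(c_j \nu_j, \sigma_j) \to 0$, and exhibit an element $\omega \in \operatorname{Hom}_{k+1}^\beta$ for which $c_j \nu_j \overset{\ast}{\rightharpoonup} \omega$, contradicting the third hypothesis applied to each $j$.

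First I set up compactness. The $(\alpha_j - n)$-homogeneity yields $|\nu_j|\big(B_R(0)\big) \leq R^\beta$ for $R \geq 1$, and the analogous estimate for $|\sigma_j|$ centered at $y_j \in \overline{B_{1/2}(0)}$ (with $|\sigma_j|\big(B_1(0)\big) \leq M$) makes both sequences uniformly bounded on compacts. Combining Banach--Alaoglu with the compactness of the Grassmannian $\operatorname{Gr}(k,n)$, of $\overline{B_{1/2}(0)}\setminus B_{1/4}(0)$, of $[\eta,1]$ and of $[0,\beta]$, I pass to a (not relabeled) subsequence along which $\nu_j \overset{\ast}{\rightharpoonup} \nu_\infty$, $\sigma_j \overset{\ast}{\rightharpoonup} \sigma_\infty$, $V_j \to V_\infty$, $y_j \to y_\infty$, $c_j \to c_\infty \in [\eta,1]$, $\alpha_j \to \alpha_\infty$, $\alpha_j' \to \alpha_\infty'$. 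Since $d$ metrizes weak*-convergence of locally uniformly bounded sequences, the assumption $d(c_j\nu_j,\sigma_j)\to 0$ forces $\sigma_\infty = c_\infty \nu_\infty$.

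The candidate $\omega := c_\infty \nu_\infty$ will be shown to lie in $\operatorname{Hom}_{k+1}^\beta$. First, $\nu_\infty \neq 0$: otherwise $c_j \nu_j \overset{\ast}{\rightharpoonup} 0$, but $0 \in \operatorname{Hom}_{k+1}^\beta$, so the third hypothesis would give $\delta \leq d(c_j \nu_j, 0) \to 0$, a contradiction. Passing the identities $(\tau_{0,\lambda})_\# \nu_j = \lambda^{\alpha_j} \nu_j$ and $(\tau_{y_j, \lambda})_\# \sigma_j = \lambda^{\alpha_j'}(\tau_{y_j,1})_\# \sigma_j$ to the weak*-limit shows that $\nu_\infty$ is $(\alpha_\infty - n)$-homogeneous at $0$ and $\sigma_\infty = c_\infty \nu_\infty$ is $(\alpha_\infty' - n)$-homogeneous at $y_\infty$; comparing $|\sigma_\infty|\big(B_r(0)\big)$ with $|\sigma_\infty|\big(B_r(y_\infty)\big)$ via $B_{r-|y_\infty|}(y_\infty) \subseteq B_r(0) \subseteq B_{r+|y_\infty|}(y_\infty)$ as $r \to \infty$ forces $\alpha_\infty = \alpha_\infty'$ (both balls carry positive mass by nontriviality together with homogeneity). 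With a common exponent, Lemma \ref{lmm:0-hom} identifies the homogeneity centers of $\nu_\infty$ with its invariant subspace $D_\infty$, so $y_\infty \in D_\infty$. Passing $(\tau_{v_j,1})_\# \nu_j = \nu_j$ for $v_j \in V_j$ to the limit gives $V_\infty \subseteq D_\infty$. The cone hypothesis $y_j \notin C(0, V_j, \gamma)$ ensures $\operatorname{dist}(y_j, V_j) \geq |y_j|\sin\gamma \geq (\sin\gamma)/4$, which persists in the limit as $y_\infty \notin V_\infty$; since $D_\infty$ is a linear subspace strictly containing $V_\infty$, we conclude $\dim D_\infty \geq k+1$. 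Together with $|\omega|\big(B_1(0)\big) \leq |c_\infty|\,|\nu_\infty|\big(B_1(0)\big) \leq 1$ (by weak* lower semicontinuity), this yields $\omega \in \operatorname{Hom}_{k+1}^\beta$.

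The conclusion is then immediate: the third hypothesis applied to each $(c_j, \omega)$ gives $d(c_j \nu_j, \omega) \geq \delta > 0$, while $c_j \nu_j \overset{\ast}{\rightharpoonup} \omega$ forces $d(c_j \nu_j, \omega) \to 0$, the desired contradiction. The delicate step is promoting the homogeneity of $\sigma_\infty$ at the second center $y_\infty$ to a genuine translational invariance of $\nu_\infty$ via Lemma \ref{lmm:0-hom}: this requires the coincidence $\alpha_\infty = \alpha_\infty'$ of exponents, and it is precisely here that the cone separation of $y_j$ from $V_j$ is converted into the crucial dimension jump $\dim D_\infty \geq k+1$.
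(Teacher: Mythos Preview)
Your proof is correct and follows essentially the same compactness-and-contradiction scheme as the paper: extract limits $(\nu_\infty,\sigma_\infty,V_\infty,y_\infty,\alpha_\infty,\alpha_\infty')$, identify $\sigma_\infty=c_\infty\nu_\infty$, compare the two homogeneities to force $\alpha_\infty=\alpha_\infty'$, and invoke Lemma~\ref{lmm:0-hom} to get $y_\infty$ into the invariant subspace $D_\infty$. The only cosmetic difference is in how the contradiction is packaged: the paper first passes the inequality $d(c\nu_j,\omega)\geq\delta$ to the limit to deduce $\dim D_\infty\leq k$, hence $D_\infty=V_\infty$, and then contradicts $y_\infty\notin C(0,V_\infty,\gamma)$; you instead argue directly that $V_\infty\subsetneq D_\infty$ (since $y_\infty\in D_\infty\setminus V_\infty$), so $c_\infty\nu_\infty\in\operatorname{Hom}_{k+1}^\beta$, and then contradict the third hypothesis at finite $j$ via $c_j\nu_j\overset{\ast}{\rightharpoonup} c_\infty\nu_\infty$. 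These are equivalent readings of the same obstruction.
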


\begin{proof}
Let us assume, by contradiction, that there exist $M,\beta\geq 0$ and $\eta,\gamma,\delta>0$ such that, for every $j \in \mathbb{N}$, there are measures $\nu_j,\sigma_j$, $k$-dimensional subspaces $V_j$ and $y_j \in \big(\overline{B_{\frac{1}{2}}(0)} \setminus B_{\frac{1}{4}}(0) \big)\setminus C(0,V_j,\gamma)$ satisfying the hypotheses of the statement with $\alpha_j,\alpha_j' \leq \beta$ and $d(c_j\nu_j,\sigma_j)\leq \frac{1}{j}$ for $|c_j| \in [\eta,1]$. Up to selecting a subsequence and without loss of generality, we can assume $c_j>0$ for every $j \in \mathbb{N}$.

The assumptions on $\nu_j$ and $\sigma_j$ imply that they are uniformly bounded on each bounded subset of $\mathbb{R}^n$; thus, up to subsequences, we can assume
\begin{gather}
\nu_j \overset{\ast}{\rightharpoonup} \nu,
\qquad
\sigma_j \overset{\ast}{\rightharpoonup} \sigma,
\\
\alpha_j \to \alpha \leq \beta,
\qquad
\alpha_j' \to \alpha' \leq \beta,
\qquad
c_j \to \bar{c} \in [\eta,1],
\qquad
V_j \to V,
\\
y_j \to y \in \big(\overline{B_{\frac{1}{2}}(0)} \setminus B_{\frac{1}{4}}(0) \big)\setminus C(0,V,\gamma),
\end{gather}
where the convergence of $V_j$ to $V$ is intended locally in the Hausdorff distance.
Since $d$ is continuous with respect to weak*-convergence, it holds $d(\bar{c}\nu,\sigma)=0$, that is $\bar{c}\nu=\sigma$. Moreover:
\begin{itemize}
\item The lower semicontinuity of the total variation measure of open sets with respect to weak*-convergence provides $|\nu|\big(B_1(0)\big) \leq 1$ and $|\sigma|\big(B_1(0)\big) \leq M$.
\item Again the continuity of $d$ with respect to weak*-convergence yields
\begin{equation}
d(c\nu,\omega)\geq \delta
\qquad
\forall |c| \in [\eta,1],\,\,\,\forall \omega \in \operatorname{Hom}_{k+1}^\beta.
\end{equation}
In particular $\nu \neq 0$.
\item Since for each $\lambda>0$ the functions $\tau_{y,\lambda}$ converge uniformly to $\tau_{z,\lambda}$ as $y \to z$, from $V_j \to V$ and $\nu_j \overset{\ast}{\rightharpoonup} \nu$, we have that $\nu$ is $(\alpha-n)$-homogeneous and that its invariant subspace $D$ contains $V$, thus $\dim D \geq k$. On the other hand, the above point implies $\dim D \leq k$, therefore $D=V$.
\item Since for each $\lambda>0$ the functions $\tau_{y_j,\lambda}$ converge uniformly to $\tau_{y,\lambda}$, we have $(\tau_{y,\lambda})_\# \nu=\lambda^{\alpha'} (\tau_{y,1})_\# \nu$. In order to infer $y \in V$, it remains to show that $\alpha'=\alpha$. To this aim, let us first observe that $|\nu|$ is $(\alpha-n)$-homogeneous with respect to the origin, as in the proof of Lemma \ref{lmm:0-hom}, and $(\alpha'-n)$-homogeneous with respect to $y$. Thus $B_{\lambda-|y|}(0) \leq B_\lambda(y) \subset B_{\lambda + |y|}(0)$ yields
\begin{equation}
(\lambda- |y|)^\alpha |\nu|\big(B_1(0)\big) \leq \lambda^{\alpha'} |\nu|\big(B_1(y)\big) \leq ( \lambda + |y|)^\alpha |\nu|\big(B_1(0)\big)
\qquad
\forall  \lambda>0,
\end{equation}
which, for large $\lambda$ and taking into account $\nu \neq 0$, implies $\alpha=\alpha'$.
\end{itemize}
The last point yields $y \in D=V$, which however contradicts $y \notin C(0,V,\gamma)$.
\end{proof}

We now proceed with the proof of Theorem \ref{thm:main}.

\begin{proof}[Proof of Theorem \ref{thm:main}]
The statement is trivial for $k=n$, thus we assume $k \in \{0,1,\dots,n-1\}$ be fixed throughout the proof.
Let us moreover fix $\gamma \in \left (0,\frac{1}{16}\right )$. It is possible to choose a finite number $m$ of $k$-dimensional linear subspaces $\{V_\ell\}_{\ell=1}^m$ such that every $k$-dimensional linear subspace $V$ is contained in $\overline{C(0,V_\ell,\gamma)}$ for some $\ell \in \{1,\dots,m\}$. For every $\ell \in \{1,\dots,m\}$ and every $i \in \mathbb{N}$, we define the set
\begin{equation}
E_{\ell,i} \coloneqq \left \{ x \in \mathcal{S}^k \colon D_x \subset \overline{C(0,V_\ell,\gamma)},\,\,\, \alpha(x) \leq i,\,\,\, \eta(x)\geq \frac{1}{i},\,\,\, d(c\mu_{x},\omega)\geq \frac{1}{i} \,\,\,\forall |c| \in \left [\frac{1}{i},1 \right ],\, \forall\omega \in \operatorname{Hom}_{k+1}^i \right \},
\end{equation}
where $D_x$ and $\alpha(x)$ are respectively the invariant subspace and the homogeneity exponent of $\mu_x$ given by Lemma \ref{lmm:0-hom}, $\eta=\eta(x)$ is given by Lemma \ref{lmm:lower_bound_mass_blow-up}, while $d$ and $\operatorname{Hom}_{k+1}^i$ are respectively the distance and the set of $(\alpha-n)$-homogeneous measures described in Section \ref{sec:notations}.
Since $\mathcal{S}^k= \bigcup_{i \in \mathbb{N}}\bigcup_{\ell=1}^m E_{i,\ell}$, it is enough to prove that each $E_{i,\ell}$ can be covered by a countable union of Lipschitz graphs.

Let us fix $i \in \mathbb{N}$ and $\ell \in \{1,\dots,m\}$. 
Then fix the value $\varepsilon$ given by Lemma \ref{lmm:too_close} for $\beta=i$, $\delta=\frac{1}{i}$, $M=i \cdot 2^\alpha$ and, for every $\rho>0$, define the set
\begin{equation}
E_{i,\ell,\rho} \coloneqq
\left \{x \in E_{i,\ell} \colon \inf_{|c| \in [\eta,1]}d(\mu_{x,r},c\mu_x)< \varepsilon \,\,\,\forall r \in (0,\rho) \right \}.
\end{equation}
If $x \in E_{i,\ell}$, then it belongs to $E_{i,\ell,\rho}$ for some $\rho>0$, because the existence of a sequence $r_j \to 0$ such that $d(\mu_{x,r_j},c\mu_x) \geq \varepsilon$ for every $j \in \mathbb{N}$ and every $|c| \in [\eta,1]$ would contradict $x \in \mathcal{S}$. Therefore
\begin{equation}
E_{i,\ell} = \bigcup_{\rho \in \mathbb{Q} \cap (0,+\infty)} E_{i,\ell,\rho}
\end{equation}
and it suffice to prove that, for a fixed $\rho>0$, the set $E_{i,\ell,\rho}$ can be covered with a countable union of Lipschitz graphs.
In order to do so, we claim that, for every $x \in E_{i,\ell,\rho}$, there exists $r>0$ such that
\begin{equation}\label{eq:outside_cones}
E_{i,\ell,\rho} \cap B_{r}(x) \setminus C(x,D_x,3\gamma)= \{x\}.
\end{equation}
Let us fix $x\in E_{i,\ell,\rho}$, let $\mu_x$ be $(\alpha-n)$-homogeneous and let us assume, toward a contradiction, that for every $j \in \mathbb{N}$ there exists 
\begin{equation}\label{eq:y_j_final}
y_j \in E_{i,\ell,\rho} \cap  \overline{B_{\frac{r_j}{2}}(x)} \setminus\Big( B_{\frac{r_j}{4}}(x) \cup C(x,D_x,3\gamma) \Big),
\end{equation}
for some $r_j \leq \rho$ such that $r_j \downarrow 0$. This in particular implies the existence of constants $c_j$ with $|c_j| \in \left [\frac{1}{i},1 \right ]$ such that
\begin{equation}\label{eq:distance_muj}
d( c_j\mu_{y_j}, \mu_{y_j,r_j})< \varepsilon
\qquad
\forall j \in \mathbb{N}.
\end{equation}
Up to selecting a non relabeled subsequence and without loss of generality, we can assume $c_j>0$ and that $\mu_{x,r_j} \overset{\ast}{\rightharpoonup} c\mu_x$ for some $c \in \left [\frac{1}{i},1 \right ]$.
We now analyze the weak*-limit (up to a suitable subsequence) of the two sequences of measures in \eqref{eq:distance_muj}.

\begin{itemize}
\item 
The first component $c_j\mu_{y_j}$ of \eqref{eq:distance_muj} satisfies the homogeneity of Lemma \ref{lmm:0-hom} with $c_j \in \left [\frac{1}{i},1 \right ]$, $\alpha(y_j)\leq i$ and $|\mu_{y_j}|\big(B_1(0)\big)= 1$; therefore the sequence of measures $\mu_{y_j}$ is weakly relatively compact and converges, up to a subsequence, to a Radon measure $\nu$, which, by the same arguments in the proof of Lemma \ref{lmm:too_close}, is $(\alpha'-n)$-homogeneous for some $\alpha' \leq i$, its invariant subspace $V$ has dimension $k$ and
\begin{equation}\label{eq:nu_sat_hyp_lemma}
|\nu|\big(B_1(0)\big)\leq 1,
\qquad
d(t\nu, \omega)>\frac{1}{i} \quad \forall |t| \in \left [ \frac{1}{i},1 \right ],\,\,\,\forall \omega \in \operatorname{Hom}_{k+1}^i.
\end{equation}
Moreover
\begin{equation}
D_{\mu_{y_j}} \subset \overline{C(0,V_\ell,\gamma)} \subset \overline{C(0,D_x,2\gamma)} \quad \forall j \in \mathbb{N}
\implies
V \subset \overline{C(0,D_x,2\gamma)}.
\end{equation}

\item
The second component $\mu_{y_j,r_j}$ of \eqref{eq:distance_muj} satisfies
\begin{equation}\label{eq:rewriting_transl}
\mu_{y_j,r_j}
= 
\frac{|\mu| \big(B_{r_j}(x)\big)}{|\mu| \big(B_{r_j}(y_j)\big)}  \left (\tau_{\frac{y_j-x}{r_j},1} \right )_\# \mu_{x,r_j}.
%
%
%
%
\end{equation}
Since  $B_{\frac{r_j}{2}}(x) \subset B_{r_j}(y_j)$, we have
\begin{equation}
\frac{|\mu| \big(B_{r_j}(x)\big)}{|\mu| \big(B_{r_j}(y_j)\big)}
\leq
\frac{|\mu| \big(B_{r_j}(x)\big)}{|\mu| \big(B_{\frac{r_j}{2}}(x)\big)},
\end{equation}
where, by \eqref{eq:estimate_ratio_balls}, it holds
\begin{equation}
\limsup_{j \to +\infty}
\frac{|\mu| \big(B_{r_j}(x)\big)}{|\mu| \big(B_{\frac{r_j}{2}}(x)\big)} \leq i\cdot 2^\alpha.
\end{equation}
Taking into account also \eqref{eq:y_j_final}, up to a subsequence we thus have
\begin{equation}
\frac{|\mu| \big(B_{r_j}(x)\big)}{|\mu| \big(B_{r_j}(y_j)\big)}
\to q \leq i\cdot 2^\alpha,
\qquad
\frac{y_j-x}{r_j} \to y \in  \overline{B_{\frac{1}{2}}(0)} \setminus \big( B_{\frac{1}{4}}(0)\cup C(0,D_x,3\gamma) \big).
\end{equation}
Therefore, \eqref{eq:rewriting_transl} and the convergence $\mu_{x,r_j} \overset{\ast}{\rightharpoonup} c \mu_x$ yield
\begin{equation}
\mu_{y_j,r_j} 
\overset{\ast}{\rightharpoonup}
cq (\tau_{y,1})_{\#} \mu_x.
\end{equation}
\end{itemize} 
Passing to the limit in \eqref{eq:distance_muj}, we have
\begin{equation}\label{eq:nu_close_mux_trasl}
d\big(\nu, c q (\tau_{y,1})_{\#} \mu_x \big) \leq \varepsilon.
\end{equation}
We now observe that, by \eqref{eq:nu_sat_hyp_lemma}, $\nu$ satisfies the assumptions of Lemma \ref{lmm:too_close} with $\beta=i$ and $\delta=\frac{1}{i}$, whereas the measure $\sigma\coloneqq c q (\tau_{y,1})_{\#} \mu_x$ satisfies $|\sigma| \big(B_1(0)\big) \leq i\cdot 2^\alpha$ and it is $(\alpha-n)$-homogeneous with respect to $-y$, because
\begin{equation}
(\tau_{-y,\lambda})_\# \sigma = c(\tau_{0,\lambda})_\# \mu_x = c\lambda^\alpha \mu_x
=
\lambda^\alpha (\tau_{-y,1})_\# \sigma.
\end{equation}
We moreover point out that, since $y \notin C(0,D_x,3\gamma)$ while $V \subset \overline{C(0,D_x,2\gamma)}$, it holds $y \notin C(0,V,\gamma)$.
Hence $\sigma$ satisfies the hypothesis of Lemma \ref{lmm:too_close} as well; however its conclusion is in contradiction with \eqref{eq:nu_close_mux_trasl}.

This proves the existence of $r$ such that \eqref{eq:outside_cones} holds true and \cite[Lemma 15.13]{mattilageometry95} (see also the observation below its proof) implies that $E_{i,\ell,\rho}$ can be covered by a countable union of Lipschitz graphs.

From \eqref{eq:outside_cones} we also infer that, for $\mathcal{H}^k$-a.e.\ $x \in E_{i,\ell,\rho}$, the approximate tangent plane $T_x E_{i,\ell,\rho}$ is contained in $\overline{C(0,D_x,3\gamma)}$. Since $\gamma$ can be chosen arbitrarily small, we have
\begin{equation}
T_x E_{i,\ell,\rho}=D_x
\qquad
\text{for $\mathcal{H}^k$-a.e.\ $x \in E_{i,\ell,\rho}$.}
\end{equation}
By standard arguments it holds $T_x E_{i,\ell,\rho}=T_x \mathcal{S}^k$ for $\mathcal{H}^k$-a.e.\ $x \in E_{i,\ell,\rho}$, which proves the last assertion of the statement.
\end{proof}

We now prove Corollary \ref{cor:rectif_criterion}.

\begin{proof}[Proof of Corollary \ref{cor:rectif_criterion}]
We first of all observe that, for every $x \in \mathbb{R}^n$ satisfying the hypotheses of the statement, it holds
\begin{equation}
\sup_{r \in (0,1]} \frac{|\mu|\big(B_r(x)\big)}{r^k} \eqqcolon s(x)
<
+\infty,
\end{equation}
otherwise there would be a sequence $r_j \to 0$ for which the above ratio is unbounded, contradicting the existence of a converging subsequence of $r_j^{-k} (\tau_{x,r_j})_\# \mu$. Since this happens $|\mu|$-a.e., we have
\begin{equation}
|\mu| = |\mu| \llcorner \mathcal{S}_\mu^k
\end{equation}
and $\mathcal{S}_\mu^k$ is $k$-rectifiable by Theorem \ref{thm:main}. We now show that $|\mu| \ll \mathcal{H}^k \llcorner E$. Indeed, let us consider $A \subset \mathcal{S}_\mu^k$ with $\mathcal{H}^k (A)= 0$. Then
\begin{equation}
A= \bigcup_{i \in \mathbb{N}} A_i,
\qquad
A_i \coloneqq \{ x \in A \colon s(x) \leq i\}
\end{equation}
and let us fix $i \in \mathbb{N}$. Since $\mathcal{H}^k(A_i)=0$, for every $\varepsilon>0$ there exists a countable covering $\{B_{r_\ell}(x_\ell)\}_{\ell \in \mathbb{N}}$ of $A_i$ such that\footnote{According to the definition of $\mathcal{H}^k$, the balls which cover $A$ are not necessarily centered on $A$, but $x \in A_i \cap B_r(y)$ implies $B_r(y) \subset B_{2r}(x)$.}
\begin{equation}
\sum_{\ell \in \mathbb{N}} r_\ell^k < \varepsilon
\qquad
\text{and}
\qquad
x_\ell \in A_i
\quad
\forall \ell \in \mathbb{N}.
\end{equation}
Thus we have
\begin{equation}
|\mu| (A_i)
\leq
\sum_{\ell \in \mathbb{N}} |\mu| \big(B_{r_\ell}(x_\ell) \big)
\leq
i \sum_{\ell \in \mathbb{N}} r_\ell^k
<
i \cdot \varepsilon.
\end{equation}
The arbitrariness of $\varepsilon$ implies $|\mu|(A_i)=0$. Since this is true for every $i \in \mathbb{N}$, we infer $|\mu|(A)=0$, hence $|\mu| \ll \mathcal{H}^k \llcorner \mathcal{S}_\mu^k$. Recalling that $|\mu|$ is a Radon measure and $\mathcal{H}^k \llcorner \mathcal{S}_\mu^k$ is $\sigma$-finite, the Radon-Nikodym Theorem implies the existence of a function $\theta \in L^1_{\mathrm{loc}}(\mathcal{S}_\mu^k,\mathcal{H}^k)$ such that $\mu= \theta \mathcal{H}^k \llcorner \mathcal{S}_\mu^k$.
\end{proof}

We conclude with the proof of Corollary \ref{cor:almost_everywhere_rect}.

\begin{proof}[Proof of Corollary \ref{cor:almost_everywhere_rect}]
From Theorem \ref{thm:main} we know that $\mathcal{S}_\mu^k$ is contained in the union of a family $\{M_i\}_{i \in \mathbb{N}}$ of $k$-dimensional Lipschitz graphs. Since $|\mu|$-a.e.\ $x \in M_i$ is a point of density $1$ for $M_i$ with respect to $|\mu|$, the blow-ups of $\mu$ and $\mu \llcorner M_i$ at $x$ are the same, therefore
\begin{equation}
\mu_x= (\mu \llcorner M_i)_x
\qquad
\text{for $|\mu|$-a.e.\ $x \in M_i$.}
\end{equation}
For such points, this implies that $\operatorname{supp}\mu_x$ is contained in a closed cone $C$ around a $k$-dimensional linear subspace $V$; we observe that $C$ does not contain any half-space of dimension larger than $k$. On the other hand, from Lemma \ref{lmm:0-hom} we infer the implication
\begin{equation}
y \in \operatorname{supp} \mu_x \implies \{\lambda y + z \colon \lambda>0, z \in D_x\} \subseteq  \operatorname{supp} \mu_x.
\end{equation}
Then it must hold either $\mu_x=0$ or $\operatorname{supp} \mu_x = D_x$. The invariance of $\mu_x$ on $D_x$ implies the conclusion.
\end{proof}

\section*{Conflicts of interest and data}
The author declares that he has no competing interests and that the manuscript does not have any associated data.

%
%
%
%
%
%
%
\printbibliography

@book {delellis_rectifiability,
    AUTHOR = {De Lellis, Camillo},
     TITLE = {Rectifiable sets, densities and tangent measures},
    SERIES = {Zurich Lectures in Advanced Mathematics},
 PUBLISHER = {European Mathematical Society (EMS), Z\"urich},
      YEAR = {2008},
     PAGES = {vi+127},
      ISBN = {978-3-03719-044-9},
   MRCLASS = {28-02 (28A75 49Q15)},
  MRNUMBER = {2388959},
MRREVIEWER = {Andrew\ Lorent},
       DOI = {10.4171/044},
       URL = {https://doi.org/10.4171/044},
}

@book {mattilageometry95,
    AUTHOR = {Mattila, Pertti},
     TITLE = {Geometry of sets and measures in {E}uclidean spaces},
    SERIES = {Cambridge Studies in Advanced Mathematics},
    VOLUME = {44},
      NOTE = {Fractals and rectifiability},
 PUBLISHER = {Cambridge University Press, Cambridge},
      YEAR = {1995},
     PAGES = {xii+343},
      ISBN = {0-521-46576-1; 0-521-65595-1},
   MRCLASS = {28A75 (49Q20)},
  MRNUMBER = {1333890},
MRREVIEWER = {Harold\ Parks},
       DOI = {10.1017/CBO9780511623813},
       URL = {https://doi.org/10.1017/CBO9780511623813},
}

@book {simonlectures83,
    AUTHOR = {Simon, Leon},
     TITLE = {Lectures on geometric measure theory},
    SERIES = {Proceedings of the Centre for Mathematical Analysis,
              Australian National University},
    VOLUME = {3},
 PUBLISHER = {Australian National University, Centre for Mathematical
              Analysis, Canberra},
      YEAR = {1983},
     PAGES = {vii+272},
      ISBN = {0-86784-429-9},
   MRCLASS = {49-01 (28A75 49F20)},
  MRNUMBER = {756417},
MRREVIEWER = {J.\ S.\ Joel},
}

@article {preiss87,
    AUTHOR = {Preiss, David},
     TITLE = {Geometry of measures in {${\bf R}^n$}: distribution,
              rectifiability, and densities},
   JOURNAL = {Ann. of Math. (2)},
  FJOURNAL = {Annals of Mathematics. Second Series},
    VOLUME = {125},
      YEAR = {1987},
    NUMBER = {3},
     PAGES = {537--643},
      ISSN = {0003-486X,1939-8980},
   MRCLASS = {28A75},
  MRNUMBER = {890162},
MRREVIEWER = {K.\ J.\ Falconer},
       DOI = {10.2307/1971410},
       URL = {https://doi.org/10.2307/1971410},
}

@article {delnin21,
    AUTHOR = {Del Nin, Giacomo},
     TITLE = {Rectifiability of the jump set of locally integrable
              functions},
   JOURNAL = {Ann. Sc. Norm. Super. Pisa Cl. Sci. (5)},
  FJOURNAL = {Annali della Scuola Normale Superiore di Pisa. Classe di
              Scienze. Serie V},
    VOLUME = {22},
      YEAR = {2021},
    NUMBER = {3},
     PAGES = {1233--1240},
      ISSN = {0391-173X,2036-2145},
   MRCLASS = {28A20 (26B30 28A75 49Q15)},
  MRNUMBER = {4334318},
}

@article {mattila_unique05,
    AUTHOR = {Mattila, Pertti},
     TITLE = {Measures with unique tangent measures in metric groups},
   JOURNAL = {Math. Scand.},
  FJOURNAL = {Mathematica Scandinavica},
    VOLUME = {97},
      YEAR = {2005},
    NUMBER = {2},
     PAGES = {298--308},
      ISSN = {0025-5521,1903-1807},
   MRCLASS = {28A75 (22D05 28A12 28C10)},
  MRNUMBER = {2191708},
MRREVIEWER = {Klaas\ Pieter\ Hart},
       DOI = {10.7146/math.scand.a-14977},
       URL = {https://doi.org/10.7146/math.scand.a-14977},
}
%
%
%
%
%
%
%
%
%
%
%
%
%
%
\end{document}